\theoremstyle{plain}
\newtheorem{theorem}{Theorem}[section]
\newtheorem{proposition}[theorem]{Proposition}
\newtheorem*{MainTheorem*}{Main Theorem}
\theoremstyle{definition}
\newtheorem{definition}[theorem]{Definition}
\theoremstyle{remark}
\newtheorem{remark}[theorem]{Remark}
\newcommand{\rank}{\mathop{\mathrm{rank}}\nolimits}
\newcommand{\Z}{{\mathbb Z}}
\newcommand{\C}{{\mathbb C}}
\newcommand{\F}{{\mathbb F}}
\newcommand{\SL}[1][2]{{\mathrm{SL}_{#1}(\C)}}
\newcommand{\GL}{\mathrm{GL}}
\newcommand{\trace}{{\rm tr}\,}
\newcommand{\I}{I}
\newcommand{\bm}[1]{\mbox{\boldmath{$#1$}}}
\newcommand{\bnd}[1]{\partial_{#1}}
\newcommand{\basisM}[1][i]{\mathbf{c}^{#1}}
\newcommand{\basisBt}[1][i]{\tilde{\mathbf{b}}^{#1}}
\newcommand{\knotexterior}{E_K}
\newcommand{\univcover}[1]{{\widetilde #1}}
\newcommand{\lift}[1]{\tilde{#1}}
\newcommand{\knotgroup}{\pi_1(\knotexterior)}
\newcommand{\Reps}{R(\knotexterior)}
\newcommand{\Conjs}{\hat{R}(\knotexterior)}
\newcommand{\Comp}[2]{\hat{R}_{#1, #2}(\knotexterior)}
\newcommand{\Sym}[1]{\mathop{\mathrm{Sym}^{#1}(\C^2)}}
\newcommand{\Tor}[2]{\mathop{\mathrm{Tor}}\nolimits (#1;#2)}
\newcommand{\TorCpx}[1]{\mathop{\mathrm{Tor}}\nolimits (#1)}
\newcommand{\ie}{i.e.,\,}
\newcommand{\vol}[1]{\mathop{\mathrm{Vol}}\nolimits (#1)}
\begin{document}


\title[Higher dimensional R-torsion for torus knot exteriors]{
  Higher even dimensional Reidemeister torsion \\
  for torus knot exteriors
}

\author{Yoshikazu Yamaguchi}

\address{Department of Mathematics,
  Tokyo Institute of Technology
  2-12-1 Ookayama, Meguro-ku Tokyo, 152-8551, Japan}
\email{shouji@math.titech.ac.jp}


\keywords{torus knots; irreducible representations; Reidemeister torsion; hyperbolic volume}
\subjclass[2010]{Primary: 57M27, 57M05, Secondary: 57M25}

\begin{abstract}
We study the asymptotics of the higher dimensional Reidemeister torsion 
for torus knot exteriors, which is related to the results 
by W.~M\"uller and P.~Menal-Ferrer \& J.~Porti 
on the asymptotics of the Reidemeister torsion and the hyperbolic volumes 
for hyperbolic $3$-manifolds.
We show that the sequence of $\frac{1}{(2N)^2} \log |\Tor{E_K}{\rho_{2N}}|$
converges to zero when $N$ goes to infinity where 
$\Tor{E_K}{\rho_{2N}}$ is the higher dimensional Reidemeister torsion 
of a torus knot exterior and an acyclic $\SL[2N]$-representation of 
the torus knot group. 
We also give a classification for $\SL$-representations of 
torus knot groups, which induce acyclic $\SL[2N]$-representations.
\end{abstract}


\maketitle

\section{Introduction}
For an oriented hyperbolic $3$-manifold of finite hyperbolic volume,
the hyperbolic volume is related to the asymptotic behavior for
the sequence of the Reidemeister torsions
with respect to $\SL[n]$-representations of the fundamental group.
Here we are concerned with $\SL[n]$-representations
defined as the composition of an $\SL$-representation corresponding to 
the complete hyperbolic structure and 
the $n$-dimensional irreducible representation of $\SL$.
For the sequence $\{ \rho_n \}$ given by $\SL[n]$-representations $\rho_n$,
we can define the sequence of the Reidemeister torsions corresponding to $\rho_n$.
We are interested in finding the asymptotic behavior for 
the sequence of the Reidemeister torsions on $n$.
The above relation was shown by W.~M\"uller~\cite{Muller:AsymptoticsAnalyticTorsion}
in the context of the Ray--Singer analytic torsion for closed hyperbolic $3$-manifolds
and then it  was extended to the cusped hyperbolic $3$-manifolds,
which is according to P.~Menal-Ferrer and J.~Porti~\cite{FerrerPorti:HigherDimReidemeister}.

Menal-Ferrer and Porti take advantage of the combinatorial description of 
the Reidemeister torsion to establish the relation between 
the asymptotic behavior for a sequence of
the Reidemeister torsions and the hyperbolic volume of a cusped hyperbolic $3$-manifold $M$:
$$
\lim_{n \to \infty}
\frac{
  \log |\Tor{M}{\rho_n}|
}{
  n^2
}
=
-\frac{
  \vol{M}
}{
  4\pi
}
$$
where $\Tor{M}{\rho_n}$ denotes the Reidemeister torsion
for the $\SL[n]$-representation $\rho_n$ of $\pi_1(M)$.

We can also take advantage of the combinatorial feature of the Reidemeister torsion
to consider the case for non--hyperbolic manifolds $M$
and expect that the sequence, given by 
$
\frac{1}{n^2}  \log |\Tor{M}{\rho_n}|,
$
converges to zero when $n$ goes to $\infty$.
In this paper, we will show that this expected asymptotic behavior holds for all torus knots exteriors
and even dimensional {\it acyclic} representations $\rho_{2N}$
(see the subsequent of Definition~\ref{def:twistedcomplex} for the terminology ``acyclic'').
Here we restrict our attention to even dimensional acyclic representations of torus knot groups.
This is related to Raghunathan's cohomology vanishing Theorem~\cite{Raghunathan65:CohomologyVanishing} and the result
of~\cite{FerrerPorti:TwistedCohomology} by Menal-Ferrer and Porti.
Those results show that 
all even dimensional $\rho_{2N}$ given by a complete hyperbolic structure are acyclic.
Since we have no $\SL$-representations corresponding to hyperbolic structures for 
non--hyperbolic $3$-manifolds, we need to consider all similar representations 
of torus knot groups to discuss the asymptotics of the Reidemeister torsions.
For the sequence of the acyclic $\SL[2N]$-representations $\rho_{2N}$,
we will establish the following asymptotic behavior of 
the Reidemeister torsions for a torus knot exterior.
\begin{MainTheorem*}[Theorem~\ref{thm:main}]
  Let $\knotexterior$ denote $S^3 \setminus N(K)$ where $K$ is a torus knot
  and $N(K)$ is an open tubular neighbourhood of $K$.
  Suppose that all induced $\SL[2N]$-representations $\rho_{2N}$ by
  an irreducible $\SL$-representation $\rho$
  are acyclic.
  For the torus knot exterior $\knotexterior$ and the sequence $\{\rho_{2N}\}_{N=1, 2, \ldots}$, it holds that
  $$
  \lim_{N \to \infty}
  \frac{
    \log |\Tor{E_K}{\rho_{2N}}|
  }{
    (2N)^2
  }
  =0.
  $$
\end{MainTheorem*}
Our theorem is derived from explicit computations of the Reidemeister torsions
for torus knot exteriors in Proposition~\ref{prop:Rtorsion_torusknots}
and the existence of upper and lower bounds 
depending only on $2N$.  To complete our observation, we have to
find which irreducible $\SL$-representations of a torus knot group
induce acyclic $\SL[2N]$-representations. 
In Proposition~\ref{prop:acyclicity_twistedcomplex}, we also give
a necessary and sufficient condition for all $\rho_{2N}$ to be acyclic.
This is given by the condition that 
an irreducible $\SL$-representation $\rho$ sends a generator of the center in a torus knot group
to the non--trivial central element $-\I$ in $\SL$.

It is known that every torus knot exterior admits the structure of a Seifert fibration.
Our observation in this paper is related to 
very simple Seifert fibered pieces in JSJ decomposition of $3$-manifolds.
It is natural to expect the same convergence on the higher dimensional Reidemeister torsion
for more general Seifert fibrations but we will not develop this point here.
For example, it is possible to find the convergence on the leading coefficient 
and determine the limit for a closed Seifert manifold.
We refer to~\cite{Yamaguchi:HigherDimRtorsionSeifert} for closed Seifert manifolds.
Our main theorem provides a first step in developing a theory of 
the higher dimensional Reidemeister torsion for general 
geometric pieces in $3$-manifolds.

\section{Preliminaries}
\subsection{Reidemeister torsion}
\label{section:Rtorsion}
\subsubsection*{Torsion for acyclic chain complexes}
{\it Torsion} is an invariant which is defined for a based chain complex.
We denote by $C_*$ a {\it based} chain complex:
$$
C_*: 0 \to C_n \xrightarrow{\bnd{n}}
C_{n-1} \xrightarrow{\bnd{n-1}}
\cdots \xrightarrow{\bnd{2}}
C_1 \xrightarrow{\bnd{1}}
C_0 \to 0
$$
where each chain module $C_i$ is a vector space over a field $\F$
and equipped with a basis $\basisM$. 
We are mainly interested in based chain complexes $C_*$ whose 
homology groups vanish, \ie
$H_*(C_*) = \bm{0}$.
Such chain complexes are referred to as being {\it acyclic}.
A chain complex $C_*$ also has a basis determined by the 
boundary operators $\bnd{i}$,
which arises from the following decomposition of chain modules.
Roughly speaking, the torsion provides a property of a based chain complex
in the difference between the given basis and a new one determined
by the boundary operators.

We suppose that a based chain complex $(C_*, \basisM)$ is acyclic.
For each boundary operator $\bnd{i}$,
let $Z_i \subset C_i$ denote the kernel and
$B_{i-1} \subset C_{i-1}$ the image by $\bnd{i}$.
The chain module $C_i$ is expressed as the direct sum of $Z_i$ and the lift of $B_{i-1}$,
denoted by $\lift{B}_{i}$.
Moreover we can rewrite the kernel $Z_i$ as
the image of boundary operator $\bnd{i+1}$:
\begin{align*}
  C_i
  &= Z_i \oplus \lift{B}_{i} \\
  &= \bnd{i+1}\lift{B}_{i+1} \oplus \lift{B}_{i} 
\end{align*}
where $Z_i = B_{i}$ is written as $\bnd{i+1}\lift{B}_{i+1}$.

We denote by $\basisBt$ a basis of $\lift{B}_{i}$.
Then the set $\bnd{i+1}(\basisBt[i+1]) \cup \basisBt$
forms a new basis of the vector space $C_i$.
We define the {\it torsion} of $C_*$ as
the following alternating product of determinants of the base change matrices:
\begin{equation}
  \label{eqn:def_torsion_complex}
  \mathrm{Tor}(C_*, \basisM[*])
  = \prod_{i \geqq 0} 
  \left[
    \bnd{i+1}(\basisBt[i+1]) \cup \basisBt \,/\, \basisM
    \right]^{(-1)^{i+1}}
  \in \F^* = \F \setminus \{0\}
\end{equation}
where $[ \bnd{i+1}(\basisBt[i+1]) \cup \basisBt \,/\, \basisM ]$
denotes the determinant of the base change matrix from
the given basis $\basisM$ to 
the new one $\bnd{i+1}(\basisBt[i+1]) \cup \basisBt$.

Note that the right hand side is independent of the choice of bases $\basisBt$.
The alternating product in~\eqref{eqn:def_torsion_complex} is determined by the based chain complex
$(C_*, \basisM[*])$.

\subsubsection*{Reidemeister torsion for CW--complexes}
We will consider the torsion of the {\it twisted chain complex} given by 
a CW--complex and a representation of its fundamental group in this paper. 
Let $W$ denote a finite CW--complex and $(V, \rho)$ denote a representation of $\pi_1(W)$,
which means the pair of a vector space $V$ over $\F$ and a homomorphism $\rho$ from $\pi_1(W)$ into $\GL(V)$.
This pair is referred to as a $\GL(V)$-representation.
\begin{definition}
  \label{def:twistedcomplex}
  We define the twisted chain complex $C_*(W; V_\rho)$ which consists of the twisted chain module given by
  $$
  C_i (W; V_\rho) :=V \otimes_{\Z[\pi_1(W)]} C_i (\univcover{W};\Z)
  $$
  where $\univcover{W}$ is the universal cover of $W$ and $C_i(\univcover{W};\Z)$
  is a left $\Z[\pi_1(W)]$-module given by the covering transformation of $\pi_1(W)$.
  In taking the tensor product, we regard $V$ as a right $\Z[\pi_1]$-module under the homomorphism $\rho^{-1}$.
  We identify a chain $\bm{v} \otimes \gamma c$ with $\rho(\gamma)^{-1}(\bm{v}) \otimes c$
  in $C_i (W; V_\rho)$.
\end{definition}
We call $C_*(W;V_\rho)$ the twisted chain complex with the coefficient $V_\rho$.
Choosing a basis of the vector space $V$, we give a basis of the twisted chain complex $C_*(W; V_\rho)$.
To be more precise, let $\{e^{i}_1, \ldots, e^{i}_{m_i}\}$ be the set of $i$-dimensional cells of $W$ and 
$\{\bm{v}_1, \ldots, \bm{v}_{d}\}$ a basis of $V$ where $d = \dim_{\F} V$. 
Choosing a lift $\lift{e}^{i}_j$ of each cell and taking tensor product with the basis of $V$,
we have the following basis of $C_i(W;V_\rho)$:
$$
\basisM(W;V)=
\{
\bm{v}_1 \otimes \lift{e}^{i}_1, \ldots, \bm{v}_d \otimes \lift{e}^{i}_1,
\ldots,
\bm{v}_1 \otimes \lift{e}^{i}_{m_i}, \ldots, \bm{v}_d \otimes \lift{e}^{i}_{m_i}
\}.
$$
We denote by $H_*(W;V_\rho)$
the homology group, which is called {\it the twisted homology group} and 
say that $\rho$ is {\it acyclic} if the twisted homology group vanishes.
Regarding $C_*(W; V_\rho)$ as a based chain complex,
we define the Reidemeister torsion for $W$ and an acyclic representation $(V, \rho)$
as the torsion of $C_*(W;V_\rho)$, \ie
\begin{equation}
  \label{eqn:def_RtorsionCW}
\mathrm{Tor}(W; V_\rho) = \mathrm{Tor}(C_*(W;V_\rho), \basisM[*](W;V))
\in \F^*.
\end{equation}
This torsion is determined up to a factor in $\{\pm \det(\rho(\gamma)) \,|\, \gamma \in \pi_1(W)\}$
since we have many choices of lifts $\lift{e}^{i}_j$
and orders and orientations of cells $e^{i}_j$.

Note that the definition~\eqref{eqn:def_RtorsionCW} does not depend
of choice of a basis $ \{\bm{v}_1, \ldots, \bm{v}_{d}\}$ since 
the Euler characteristic of $C_*(W;V_\rho)$ must be zero by the acyclicity.
More precisely, 
the torsion defined by another basis $\{\bm{u}_1, \ldots, \bm{u}_{d}\}$
is expressed as the product of the torsion defined by $\{\bm{v}_1, \ldots, \bm{v}_{d}\}$
and the following factor:
$$
[ \{\bm{u}_1, \ldots, \bm{u}_{d}\} / \{\bm{v}_1, \ldots, \bm{v}_{d}\} ]^{- \chi(W)}.
$$

\begin{remark}
 If $\dim_{\F} V$ is even and $\rho$ is an $\mathrm{SL}(V)$-representation, 
   then the torsion  $\mathrm{Tor}(W; V_\rho)$ has no indeterminacy.
\end{remark}
See Turaev's book~\cite{Turaev:2000} for more details on the Reidemeister torsion.

\subsection{Irreducible representations of $\SL$ and higher dimensional Reidemeister torsion}
We review irreducible representations of $\SL$ briefly.
The vector space $\C^2$ has the standard action of $\SL$.
It is known that the symmetric product $\Sym{n-1}$ and the induced action by $\SL$
gives an $n$-dimensional irreducible representation of $\SL$.
We can identify $\Sym{n-1}$ with the vector space $V_n$ of 
homogeneous polynomials on $\C^2$ with degree $n-1$, \ie
$$
V_n =
\mathrm{span}_{\C}\langle
z_1^{n-1}, z_1^{n-2}z_2, \ldots, z_1^{n-k-1} z_2^k, \ldots,z_1 z_2^{n-2}, z_2^{n-1}
\rangle.
$$
The action of $A \in \SL$ is expressed as 
\begin{equation}
  \label{eqn:action_SL2}
A \cdot p(z_1, z_2) = p (A^{-1} \left(\begin{smallmatrix} z_1 \\ z_2\end{smallmatrix} \right))
\end{equation}
where $p(z_1, z_2)$ is a homogeneous polynomial and the variables in the right hand are determined by 
the action of $A^{-1}$ on the column vector as a matrix multiplication.
We write $(V_n, \sigma_n)$ for the representation given by the action~\eqref{eqn:action_SL2} of $\SL$
where $\sigma_n$ denotes the homomorphism from $\SL$ into $\GL (V_n)$.

\begin{remark}
  It is known that  
  \begin{enumerate}
  \item
    each representation $(V_n, \sigma_n)$ turns into an irreducible $\SL[n]$-representation of $\SL$ and;
  \item
    every irreducible $n$-dimensional representation of $\SL$ is equivalent to $(V_n, \sigma_n)$.
  \end{enumerate}
\end{remark}

We also mention that $\SL$ acts on the right in the representation $(V_n, \sigma_n)$.
Let $\rho$ be a homomorphism from $\pi_1(W)$ into $\SL$
and $\rho_n$ the composition $\sigma_n \circ \rho$, which gives 
an $\SL[n]$-representation of $\pi_1(W)$.
If the twisted chain complex $C_*(W; V_n)$ is acyclic, 
we can consider $\Tor{W}{V_n}$.
We will drop the subscript $\rho$ in the coefficient for simplicity when no confusion can arise.
\begin{definition}
We define the higher dimensional Reidemeister torsion 
for $W$ and $\rho$
as 
$\Tor{W}{V_n}$
and denote by $\Tor{W}{\rho_n}$.
\end{definition} 

We also review the explicit form of $\sigma_n(A)$ for a diagonal matrix $A$,
which will be needed in our computations.
Let $A \in \SL$ be a diagonal matrix 
$\left(
\begin{smallmatrix}
  a & 0 \\
  0 & a^{-1}
\end{smallmatrix}
\right)$ and $p(z_1, z_2)$ the monomial $z_1^{n-k-1} z_2^k$.
Then the action of $A$ is expressed as
$$
A \cdot p(z_1, z_2) = p(a^{-1}z_1, az_2) = a^{2k-n+1} z_1^{n-k} z_2^k = a^{2k-n+1} p(z_1, z_2). 
$$
Hence the eigenvalues of $\sigma_n(A) \in \SL[n]$ are given by 
$a^{-n+1}, a^{-n+3}, \ldots, a^{n-1}$,
\ie 
the weight space of $\sigma_n$ is $\{-n+1, -n+3, \ldots, n-1\}$ and
the multiplicity of each weight is $1$.

\subsection{$\SL$-representations of torus knot groups}
Here and subsequently, let $K$ denote the $(p, q)$--torus knot and $E_K$ the knot exterior.
The Reidemeister torsion depends on the conjugacy classes of 
irreducible representations of a fundamental group.
We are interested in irreducible representations from a torus knot group into $\SL$.
We denote by $\Reps$ 
the set of irreducible $\SL$-representations of $\knotgroup$ and
by $\Conjs$ the set of conjugacy classes of representations in $\Reps$.
According to D.~Johnson~\cite{Johnson:unpublished}, 
we can describe $\Conjs$ from the presentation 
$\langle x, y \,|\, x^p = y^q \rangle$ 
of $\knotgroup$ as follows:
\begin{proposition}[\cite{Johnson:unpublished},
    Proposition~$3.7$ in \cite{KitanoMorifuji:TwistedAlexTorusKnots}]
  \label{prop:chractervar_torusknot}
  Let $K$ be the $(p, q)$-torus knot.
  Then $\Conjs$ consists of $(p-1)(q-1)/2$ components,
  which are determined by the following pairs $(a, b)$ of integers, denoted by $\Comp{a}{b}$:
  \begin{enumerate}
  \item $0 < a < p$, $0 < b < q$.
  \item $a \equiv b$ mod $2$.
  \item For any $[\rho] \in \Comp{a}{b}$, we have that
    $\trace \rho(x) = 2\cos\big(\frac{\pi a}{p}\big)$ and
    $\trace \rho(y) = 2\cos\big(\frac{\pi b}{q}\big)$.
    Moreover the irreducible $\SL$-representation $\rho$ sends $x^p (= y^q)$
    to $(-\I)^a$.
  \item
    $\trace \rho (\mu) \not = 2\cos (\pi (ra/p \pm sb/q ))$
    where $\mu$ denotes the meridian given by $x^{-r} y^s$ and
    $r$ and $s$ are integers satisfying that $ps-qr=1$.
  \end{enumerate}
  In particular, $\Comp{a}{b}$ is parametrized by $\trace \rho(\mu)$
    and the complex dimension is one.
\end{proposition}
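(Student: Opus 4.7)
The plan is to exploit the algebraic structure of the presentation $\langle x, y \mid x^p = y^q \rangle$, in which the element $z := x^p = y^q$ generates the center of $\knotgroup$. Since $\rho$ is irreducible, Schur's lemma forces $\rho(z)$ to be a scalar matrix, and since $\rho(z) \in \SL$ this scalar must be $\pm\I$. Writing $\rho(z) = (-\I)^a = (-\I)^b$, with $a$ and $b$ to be pinned down shortly, immediately yields the parity requirement $a \equiv b \pmod{2}$.

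The next step is to diagonalize $\rho(x)$. The identity $\rho(x)^{2p} = \I$ shows that $\rho(x)$ is annihilated by the separable polynomial $X^{2p} - 1$, so it is semisimple with eigenvalues $\alpha^{\pm 1}$ where $\alpha^p = (-1)^a$. Taking $\alpha = e^{i\pi a/p}$ gives $\trace \rho(x) = 2\cos(\pi a/p)$. The restriction $0 < a < p$ is forced by irreducibility: if $a \in \{0, p\}$ then $\rho(x) = \pm\I$ is central, so $\rho(\knotgroup)$ is generated by $\rho(y)$ together with a scalar and is therefore abelian, which contradicts irreducibility in dimension two. The same argument applied to $y$ produces $\trace \rho(y) = 2\cos(\pi b/q)$ with $0 < b < q$. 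This establishes items (1)--(3).

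To obtain (4) and the dimension count, I would fix $(a,b)$ and conjugate $\rho$ so that $\rho(x) = \mathrm{diag}(\alpha, \alpha^{-1})$. The residual gauge is the diagonal torus stabilizing $\rho(x)$, and $\rho(y)$ lies in the conjugacy class of $\SL$-matrices with trace $\beta + \beta^{-1}$, where $\beta := e^{i\pi b/q}$. Writing
$$
\rho(y) = \begin{pmatrix} u & v \\ w & \beta + \beta^{-1} - u \end{pmatrix}, \qquad vw = -(u-\beta)(u-\beta^{-1}),
$$
the torus acts by $(v,w) \mapsto (\lambda^2 v, \lambda^{-2} w)$, so $u$ is the unique gauge-invariant complex parameter. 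This confirms that $\Comp{a}{b}$ has complex dimension one and that $\trace \rho(\mu)$ is a polynomial function of $u$. Reducibility holds precisely when $\rho(y)$ is triangular in the chosen basis, i.e. when $vw = 0$, equivalently $u \in \{\beta, \beta^{-1}\}$; computing $\rho(\mu) = \rho(x)^{-r}\rho(y)^s$ in either triangular case produces the eigenvalue pair $\alpha^{\mp r}\beta^{\pm s}$, hence $\trace \rho(\mu) = 2\cos\!\bigl(\pi(ra/p \pm sb/q)\bigr)$, exactly the excluded locus in~(4).

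Finally, the enumeration $(p-1)(q-1)/2$ follows by counting admissible pairs $(a,b)$; since $\gcd(p,q) = 1$ at most one of $p, q$ is even, and a short case analysis yields the same total in both parity regimes. The main obstacle I anticipate is verifying that each $\Comp{a}{b}$ is actually a single irreducible component of $\Conjs$ rather than a union, and that the map sending $u$ to $\trace \rho(\mu)$ realizes it as a genuine parametrization; this should reduce to standard facts about $\SL$-character varieties, namely that the character is determined by the triple $(\trace \rho(x), \trace \rho(y), \trace \rho(xy))$ and that $\trace \rho(\mu)$ pins down the remaining freedom once $a$ and $b$ are fixed.
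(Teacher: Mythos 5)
The paper itself gives no proof of this statement --- it is quoted from Johnson and from Kitano--Morifuji --- so your argument can only be measured against the standard one, which it essentially reproduces. Items (1)--(3) are handled correctly: Schur's lemma gives $\rho(x^p)=\rho(y^q)=\pm \I$ and hence $a\equiv b \pmod 2$; semisimplicity of $\rho(x)$, $\rho(y)$ follows from $\rho(x)^{2p}=\rho(y)^{2q}=\I$; and ruling out $a\in\{0,p\}$, $b\in\{0,q\}$ by irreducibility is right. The normal form for $\rho(y)$ with the residual torus action, the identification of the reducible locus with $vw=0$ (valid because $\rho(x)$ is regular, so a common eigenvector must be a standard basis vector), and the parity count $(p-1)(q-1)/2$ are all fine; moreover the quotient $\{u\neq\beta^{\pm1}\}$ is connected and $\trace\rho(x)$, $\trace\rho(y)$ are locally constant and separate the pieces, so each $\Comp{a}{b}$ really is a single component.

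The step you flag but leave open is the only genuine one: that $u\mapsto\trace\rho(\mu)$ is injective. This is exactly what upgrades your one direction of (4) (``reducible $\Rightarrow$ the excluded trace values'') to the direction actually asserted (``irreducible $\Rightarrow$ not those values''), and it is also the whole content of the final parametrization claim, so it cannot be waved off as a standard fact about character varieties. It does close easily here: by Cayley--Hamilton, $\rho(x)^{-r}=c_1\rho(x)+c_0\I$ and $\rho(y)^{s}=d_1\rho(y)+d_0\I$ with $c_1=(\alpha^{-r}-\alpha^{r})/(\alpha-\alpha^{-1})$ and $d_1=(\beta^{s}-\beta^{-s})/(\beta-\beta^{-1})$, so $\trace\rho(\mu)=c_1d_1\,\trace\rho(xy)+(\mathrm{const})$ is affine in $t=\trace\rho(xy)$, and $t=(\alpha-\alpha^{-1})u+\alpha^{-1}(\beta+\beta^{-1})$ is affine and non-constant in $u$. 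The leading coefficient $c_1d_1$ is nonzero precisely when neither $\rho(x)^{r}$ nor $\rho(y)^{s}$ is scalar, which holds because $qr\equiv-1\pmod p$ and $ps\equiv1\pmod q$ give $\gcd(r,p)=\gcd(s,q)=1$, so $\alpha^{2r}\neq1$ and $\beta^{2s}\neq1$ for $0<a<p$, $0<b<q$. With this inserted, $u\mapsto\trace\rho(\mu)$ is a bijection of $\C$ sending the two reducible characters $u=\beta^{\pm1}$ to $2\cos(\pi(ra/p\mp sb/q))$ and the irreducible locus onto the complement, which finishes (4) and the last sentence of the proposition.
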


Therefore, for every irreducible $\SL$-representation $\rho$, 
the $\SL$-matrices $\rho(x)$ and $\rho(y)$ are diagonalizable, \ie 
they are conjugate to the following diagonal matrices:
$$
\begin{pmatrix}
  e^{a\pi\sqrt{-1}/p} & 0 \\
  0 & e^{-a\pi\sqrt{-1}/p}
\end{pmatrix}
\quad
\text{and}
\quad
\begin{pmatrix}
  e^{b\pi\sqrt{-1}/q} & 0 \\
  0 & e^{-b\pi\sqrt{-1}/q}
\end{pmatrix}
$$
when the conjugacy class of $\rho$ is contained in $\Comp{a}{b}$.
The eigenvalues of $\rho_{2N}(x)$ are given by
$$
e^{\pm(2N-1)a\pi\sqrt{-1}/p}, e^{\pm(2N-3)a\pi\sqrt{-1}/p}, \ldots, e^{\pm a\pi\sqrt{-1}/p}
$$
and those of $\rho_{2N}(y)$ are given by 
$e^{\pm(2N-1)b\pi\sqrt{-1}/q}$, $e^{\pm(2N-3)b\pi\sqrt{-1}/q}, \ldots, e^{\pm b\pi\sqrt{-1}/q}$.

\section{Twisted homology groups of torus knot exteriors}
\label{section:twisted_homology}
We consider the acyclicity for 
the twisted chain complex of $\knotexterior$ defined by $\rho_{2N}$.
The twisted homology group is invariant under homotopy equivalence. We first 
construct a $2$-dimensional CW--complex $W$,
which has the same twisted homology group and the same Reidemeister torsion as those of
$\knotexterior$.
Then we observe the twisted chain complex $C_*(W;V_{2N})$ instead of $C_*(\knotexterior;V_{2N})$.

From the presentation 
$\langle x, y \,|\, x^p = y^q \rangle$ 
of $\knotgroup$, we can construct a $2$-dimensional CW--complex $W$ consisting 
of one $0$--cell $e^0$
and two $1$--cells $e^1_i$ ($i=1, 2$) and one $2$-cell $e^2$ as in Fig.~\ref{fig:CWcpxW}.
The closed loops $e^0 \cup e^1_1$ and $e^0 \cup e^1_2$ correspond to 
the generators $x$ and $y$.

\begin{figure}[ht]
  \begin{center}
    \includegraphics[scale=.45]{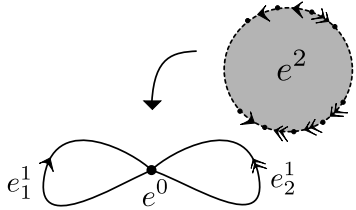}
  \end{center}
  \caption{CW--complex $W$}
  \label{fig:CWcpxW}
\end{figure}

Let $\rho$ be an irreducible $\SL$-representation 
whose conjugacy class lies in $\Comp{a}{b}$.
The composition $\rho_{2N} = \sigma_{2N} \circ \rho$ also defines the twisted chain complex
of $W$ as follows:
\begin{equation}
  \label{eqn:twistedcpx_W}
0 \to
  C_2(W;V_{2N}) \xrightarrow{\partial_2}
  C_1(W;V_{2N}) \xrightarrow{\partial_1}
  C_0(W;V_{2N}) 
\to 0.
\end{equation}
When we write $X$ and $Y$ for the matrices $\rho_{2N}(x)$ and $\rho_{2N}(y)$,
the boundary operators $\partial_i$ ($i=1, 2$) are expressed as
\begin{equation}
  \label{eqn:boundary_ops}
\partial_2
=
\begin{pmatrix}
  \I + X + \cdots X^{p-1} \\
  -(\I + Y + \cdots Y^{q-1})
\end{pmatrix}, \quad
\partial_1
=
\begin{pmatrix}
  X - \I & 
  Y - \I
\end{pmatrix}.
\end{equation}
The acyclicity of $C_*(W;V_{2N})$ is determined by the parities of the parameters $a$ and $b$ 
for the component $\Comp{a}{b}$ which contains the conjugacy class of $\rho$,
\ie the acyclicity of $C_*(\knotexterior;V_{2N})$ depends only on the component $\Comp{a}{b}$.
\begin{proposition}
  \label{prop:acyclicity_twistedcomplex}
  Suppose that  the conjugacy class of $\rho$ is contained in $\Comp{a}{b}$.
  \begin{enumerate}
  \item
    If the integers $a$ and $b$ are odd, then $C_*(\knotexterior;V_{2N})$ is acyclic for all $N \geqq 1$,
    namely $H_*(\knotexterior;V_{2N})=\bm{0}$.
  \item
    If the integers $a$ and $b$ are even, then $C_*(\knotexterior;V_{2N})$ is not acyclic for all $N \geqq 1$.
  \end{enumerate}
\end{proposition}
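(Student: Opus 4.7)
The plan is to analyze the twisted complex~\eqref{eqn:twistedcpx_W} through the spectral structure of $X := \rho_{2N}(x)$ and $Y := \rho_{2N}(y)$, using Proposition~\ref{prop:chractervar_torusknot}(3) to pin down the central values. Since every weight of $\sigma_{2N}$ is an odd integer, $\sigma_{2N}(-I) = -I_{2N}$, and therefore
$$
\rho_{2N}(x^p) = \sigma_{2N}((-I)^a) = (-1)^a I_{2N}, \qquad \rho_{2N}(y^q) = (-1)^b I_{2N}.
$$
The common parity of $a, b$ thus determines whether $X^p = \pm I_{2N}$ and $Y^q = \pm I_{2N}$, and the two cases of the proposition separate cleanly along this divide.

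For case~(1), $X^p = -I$ forces every eigenvalue $\lambda$ of $X$ to satisfy $\lambda^p = -1 \ne 1$, so $X - I$ is invertible. The telescoping identity $(X - I)(I + X + \cdots + X^{p-1}) = X^p - I = -2I$ then makes $I + X + \cdots + X^{p-1}$ itself invertible (equal to $-2(X - I)^{-1}$), and similarly for $Y$. Consequently the upper block of $\partial_2$ in~\eqref{eqn:boundary_ops} is invertible so $\partial_2$ is injective, and the first block of $\partial_1$ is invertible so $\partial_1$ is surjective. A rank count using $\chi(W) \cdot \dim V_{2N} = 0$ together with $\partial_1 \partial_2 = 0$ then forces $\operatorname{im}\partial_2 = \ker \partial_1$, giving acyclicity.

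For case~(2), $X^p = Y^q = I$, and on the $\lambda$-eigenspace of $X$ the operator $\Phi_X := I + X + \cdots + X^{p-1}$ acts as $1 + \lambda + \cdots + \lambda^{p-1}$, which equals $p$ if $\lambda = 1$ and $0$ otherwise. Hence $\ker \Phi_X = E_X := \bigoplus_{\lambda \ne 1} \ker(X - \lambda I)$, and analogously $\ker \Phi_Y = E_Y$, so
$$
\ker \partial_2 = E_X \cap E_Y.
$$
To conclude nonacyclicity it then suffices to prove $\dim E_X + \dim E_Y > 2N$, equivalently $\dim \ker(X - I) + \dim \ker(Y - I) < 2N$, since this forces $\ker \partial_2 \ne 0$ and hence $H_2(W; V_{2N}) \ne 0$.

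The hard part will be establishing this final inequality uniformly in $N$. The eigenvalues of $X$ are $\alpha^m$ for the $2N$ odd integers $m \in [-(2N-1), 2N-1]$ with $\alpha = e^{a\pi i/p}$; writing $a = 2a'$, the condition $\alpha^m = 1$ becomes $P \mid m$ for $P := p/\gcd(a', p) \ge 2$. If $P$ is even, no odd multiple of $P$ qualifies and $\dim \ker(X - I) = 0$; if $P$ is odd then $P \ge 3$, and elementary counting gives $\dim \ker(X - I) \le (2N - 1)/P + 1$. Applying the analogue to $Q := q/\gcd(b', q)$ and exploiting the coprimality $\gcd(p, q) = 1$---which rules out the tightest case $P = Q = 3$---one obtains $\dim \ker(X - I) + \dim \ker(Y - I) \le (2N - 1) \cdot \tfrac{8}{15} + 2 < 2N$ for $N \ge 2$. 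The edge case $N = 1$ is handled directly: $|m| \le 1$ and $\alpha^{\pm 1} = 1$ would require $2p \mid a$, impossible for $0 < a < p$. I expect the number-theoretic bookkeeping here---in particular the interplay between the parities of $P$ and $Q$ and the coprimality of $p$ and $q$---to be the place where real care is required.
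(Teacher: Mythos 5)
Your argument is correct and follows the same overall strategy as the paper: case (1) is essentially identical (the identity $(\I-X)(\I+X+\cdots+X^{p-1})=\I-X^p=2\I$, valid because $\sigma_{2N}(-\I)=-\I$ and $\rho(x^p)=(-\I)^a=-\I$, makes the relevant blocks of $\partial_2$ and $\partial_1$ invertible, and a dimension count finishes), and case (2) likewise reduces non-acyclicity to showing $\ker\partial_2=\ker(\I+X+\cdots+X^{p-1})\cap\ker(\I+Y+\cdots+Y^{q-1})\neq \bm{0}$ by estimating the multiplicities of the eigenvalue $1$ of $X$ and $Y$. The difference lies in how that estimate is organized, and your version is in fact the more careful one. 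The paper splits case (2) into two sub-cases and, in the harder one, assumes ``without loss of generality'' that $N\geq 2$, that $p,q$ are odd, and that $\gcd(a,p)=\gcd(b,q)=1$; the coprimality assumption is genuinely load-bearing, since when $d=\gcd(a/2,p)>1$ the eigenvalue-$1$ multiplicity of $X$ is governed by $P=p/d<p$ rather than by $p$, so the bound $\xi\leq (2N-1)/p+1$ must be weakened to $(2N-1)/P+1$ and the closing inequality rechecked. Your bookkeeping does exactly this: the observation $P\geq 3$ (from $a/2<p/2$), the parity dichotomy (even $P$ forces $\dim\ker(X-\I)=0$), the use of $\gcd(P,Q)\mid\gcd(p,q)=1$ to exclude the tight case $P=Q=3$, and the direct treatment of $N=1$ together yield a uniform argument with no unjustified reductions, at the cost of slightly more number-theoretic care. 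Both routes land on the same inequality $\dim\ker(X-\I)+\dim\ker(Y-\I)<2N$, which is what forces $H_2(W;V_{2N})\neq\bm{0}$.
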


\begin{proof}
  It is sufficient to observe $C_*(W;V_{2N})$ instead of $C_*(\knotexterior;V_{2N})$.
  First we suppose that $a \equiv b \equiv 1$ (mod $2$).
  The equality of the matrices:
  $$
  (\I-X)(\I + X + \cdots + X^{p-1}) = 2\I.
  $$
  implies that the matrices $\I - X$ and $\I + X + \cdots + X^{p-1}$ are non--singular,
  \ie
  $\rank (\I - X) = \rank (\I + X + \cdots + X^{p-1}) = 2N$.
  By counting dimensions, we can see that the complex~\eqref{eqn:twistedcpx_W} is exact.
  
  Next we consider the case that $a \equiv b \equiv 0$ (mod $2$).
  In this case, all eigenvalues of $X$ and $Y$ are roots of unity.
  We divide this case into two parts:
  \begin{itemize}
  \item[(i)]
    either $\SL[2N]$-matrices $X$ or $Y$ does not have the eigenvalue $1$;
  \item[(ii)]
    both of $X$ and $Y$ have the eigenvalue $1$.
  \end{itemize}
  In the case (i), there is no loss of generality in assuming that $X$ does not have the eigenvalue $1$.
  Since $1+ \zeta + \cdots + \zeta^{p-1}=0$ for any $p$-th root of unity $\zeta \not = 1$,
  we have that $\I + X + \cdots + X^{p-1} = O$.
  Similarly we have that $\rank (\I + Y + \cdots + Y^{q-1}) < 2N$.
  Hence the rank of $\partial_2$ is less than $2N$,
  which implies that $H_2(W;V_{2N}) \not = \bm{0}$.

  Last we consider the case (ii).
  Without loss of generality, we can assume that
  \begin{itemize}
  \item the dimension of $V_{2N}$ is greater than $2$, \ie $N \geqq 2$;
  \item both of $p$ and $q$ are odd and;
  \item the pair $p$ and $a$ are coprime and;
  \item the pair $q$ and $b$ are also coprime.
  \end{itemize}
  The multiplicity $\xi$ of eigenvalue $1$ for $X$ is given by
  twice the number of integers $\ell$ such that $p \leqq p(2\ell-1) \leqq 2N-1$.
  Similarly we write $\eta$ for the multiplicity of eigenvalue $1$ for $Y$,
  which is given by
  twice the number of integers $m$ such that $q \leqq q(2m-1) \leqq 2N-1$.
  By the explicit form of $\partial_2$ as in~\eqref{eqn:boundary_ops},
  the kernel of $\partial_2$ coincides with the intersection
  of $\ker (\I + X + \cdots + X^{p-1})$ and $\ker (\I + Y + \cdots + Y^{q-1})$. 
  Since the ranks of $\I + X + \cdots X^{p-1}$ and $\I + Y + \cdots Y^{q-1}$
  are equal to $\xi$ and $\eta$,
  we have that
  \begin{align*}
    \dim_\C \ker \partial_2
    &= \dim_\C \ker (\I + X + \cdots + X^{p-1}) + \dim_\C \ker (\I + Y + \cdots + Y^{q-1}) \\
    & \quad - \dim_\C
    \mathrm{span}\big(
    \ker (\I + X + \cdots + X^{p-1}) \cup \ker (\I + Y + \cdots + Y^{q-1})
    \big)\\
    &\geqq \dim_\C \ker (\I + X + \cdots + X^{p-1}) + \dim_\C \ker (\I + Y + \cdots + Y^{q-1}) -2N \\
    &= 2N - \xi - \eta\\
    &\geqq 2N - \frac{2N-1}{p} - 1 - \frac{2N-1}{q} - 1.
  \end{align*}
  We proceed to find the range on $N$
  satisfying that $(2N-1)(1/p + 1/q) + 2 < 2N$.
  We can rewrite the condition $(2N-1)(1/p + 1/q) + 2 < 2N$ as 
  \begin{equation}
    \label{eqn:range_N}
    \frac{1}{1 - 1/p - 1/q} + 1 < 2N.
  \end{equation}
  Since we assume that $p \geqq 3$ and $q \geqq 5$, the inequality~\eqref{eqn:range_N}
  holds for $N \geqq 2$.
  Hence we have shown that $\dim_\C \ker \partial_2 > 0$ under our assumptions,
  which implies that $H_2(W;V_{2N})$ is nontrivial.
\end{proof}

\section{Computation of the higher dimensional Reidemeister torsion for torus knot exteriors}
The Reidemeister torsion has the invariance under simple homotopy equivalence.
In fact, a torus knot exterior $\knotexterior$ is simple homotopy equivalent to 
the $2$-dimensional CW--complex $W$ constructed as in Section~\ref{section:twisted_homology}.
Therefore we can deduce the explicit value of the Reidemeister torsion for a knot exterior $\knotexterior$
from the twisted chain complex~\eqref{eqn:twistedcpx_W} of $W$.
We assume that $\rho_{2N}$ defines the acyclic twisted chain complex $C_*(\knotexterior;V_{2N})$.
As we have seen in Section~\ref{section:twisted_homology},
such a representation $\rho_{2N}$ is given by an irreducible $\SL$-representation $\rho$ of $\knotgroup$
such that 
$\rho(x^p) = \rho(y^q) = -I$.
We first show the explicit value of the Reidemeister torsion for $\rho_{2N}$ and then
consider the asymptotic behavior on the dimension $2N$.

\subsection{Higher dimensional Reidemeister torsion for $\rho_{2N}$}
We write $\{ \bm{v}_i \,|\, i=1, \ldots, 2N \}$ for a basis of $V_{2N}$.
For the acyclic chain complex $C_*(W;V_{2N})$, we can set $\basisBt$ $(i=0, 1, 2)$ as 
\begin{align*}
  \basisBt[2] &= \{ \bm{v}_1 \otimes \lift{e}^2, \ldots, \bm{v}_{2N} \otimes \lift{e}^{2}\}, \\
  \basisBt[1] &= \{ \bm{v}_1 \otimes \lift{e}^1_2, \ldots, \bm{v}_{2N} \otimes \lift{e}^1_2\}
\end{align*}
and the others $\basisBt$ as $\emptyset$. 
The torsion of $C_*(W;V_{2N})$ is given by the alternating product:
\begin{align}
\prod_{i=0}^2 [ \bnd{i+1} \basisBt[i+1] \cup \basisBt / \basisM ]^{(-1)^{i+1}}
&=
\left[\bnd{3} \basisBt[3] \cup \basisBt[2] / \basisM[2] \right]^{(-1)} \cdot
\left[\bnd{2} \basisBt[2] \cup \basisBt[1] / \basisM[1] \right] \cdot
\left[\bnd{1} \basisBt[1] \cup \basisBt[0] / \basisM[0] \right]^{(-1)} \notag \\
&=
\frac{
  \det (\I + X + \cdot + X^{p-1})
}{
  \det (Y-\I)
} \notag \\
&=
\frac{
  2^{2N}
}{
  \det (\I -X)\det (Y-\I)
}. \label{eqn:torsion_W}
\end{align}
Here we use $(\I - X)(\I + X + \cdots X^{p-1}) = \I - X^p = 2\I$.
The determinant $\det(\I-X)$ in the denominator of Eq.~\eqref{eqn:torsion_W} is expressed as
\begin{align}
  \det(\I - X)
  &= \prod_{k=1}^{N} (1 - e^{\frac{(2k-1)a\pi\sqrt{-1}}{p}}) (1 - e^{\frac{-(2k-1)a\pi\sqrt{-1}}{p}}) \notag\\
  &= \prod_{k=1}^{N} 4 \sin^2 \left( \frac{(2k-1)a\pi}{2p}\right)
  \label{eqn:denominator_X}
\end{align}
and $\det(Y - \I)$ is also expressed  as
\begin{equation}
\label{eqn:denominator_Y}
\det(Y - \I)
  = \prod_{k=1}^{N} 4 \sin^2 \left( \frac{(2k-1)b\pi}{2q}\right).
\end{equation}
Substituting Eq.s~\eqref{eqn:denominator_X} \& \eqref{eqn:denominator_Y} into \eqref{eqn:torsion_W}, 
we obtain the explicit form of $\TorCpx{C_*(W;V_{2N})}$ which coincides with
the Reidemeister torsion of $\knotexterior$.
\begin{proposition}
  \label{prop:Rtorsion_torusknots}
  If we choose an irreducible $\SL$-representation $\rho$ whose conjugacy class lies in $\Comp{a}{b}$
  such that $a \equiv b \equiv 1$ (\textrm{mod} $2$),
  then the Reidemeister torsion for $\knotexterior$ and $\rho_{2N}$ is expressed as
  $$
  \Tor{\knotexterior}{\rho_{2N}} =
  \frac{
    2^{2N}
  }{
    \prod_{k=1}^{N}
    4^2
    \sin^2 \left( \frac{(2k-1)a\pi}{2p}\right)
    \sin^2 \left( \frac{(2k-1)b\pi}{2q}\right)
  }.
  $$
\end{proposition}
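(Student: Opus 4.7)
The plan is to transfer the computation from $\knotexterior$ to the two-dimensional CW-complex $W$ of Section~\ref{section:twisted_homology} via simple homotopy invariance of the Reidemeister torsion, and then compute $\TorCpx{C_*(W;V_{2N})}$ directly from the definition using the explicit boundary operators in~\eqref{eqn:boundary_ops}. Under the hypothesis $a \equiv b \equiv 1 \pmod{2}$, Proposition~\ref{prop:acyclicity_twistedcomplex} guarantees acyclicity, and the same proof shows that both $\I - X$ and $\I - Y$ are nonsingular, which is what we need to make convenient choices of lifts.

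I would pick the natural bases $\basisBt[2] = \{\bm{v}_i \otimes \lift{e}^{2}\}_{i=1}^{2N}$ and $\basisBt[1] = \{\bm{v}_i \otimes \lift{e}^{1}_2\}_{i=1}^{2N}$, with $\basisBt[0] = \basisBt[3] = \emptyset$. To verify these really give lifts of $B_i$, I would check that the resulting base-change determinants are nonzero: the first factor gives $[\partial_2 \basisBt[2] \cup \basisBt[1]/\basisM[1]]$, which by the block structure of $\partial_2$ in~\eqref{eqn:boundary_ops} equals $\det(\I + X + \cdots + X^{p-1})$; the second gives $[\partial_1 \basisBt[1]/\basisM[0]] = \det(Y - \I)$; and the $\basisBt[2]/\basisM[2]$ factor is trivial. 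Both determinants are nonzero precisely because $a,b$ are odd, confirming the choices are valid.

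The torsion of the complex is then the alternating product
$$
\TorCpx{C_*(W;V_{2N})} = \frac{\det(\I + X + \cdots + X^{p-1})}{\det(Y-\I)}.
$$
To simplify the numerator I would use the key algebraic identity $(\I - X)(\I + X + \cdots + X^{p-1}) = \I - X^p$, combined with the crucial fact from Proposition~\ref{prop:chractervar_torusknot}(3) that for odd $a$ the representation sends $x^p$ to $-\I$, so $X^p = -\I$ and the right hand side is $2\I$. Taking determinants yields $\det(\I + X + \cdots + X^{p-1}) = 2^{2N}/\det(\I - X)$, giving~\eqref{eqn:torsion_W}.

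The final step is to evaluate $\det(\I - X)$ and $\det(Y - \I)$ using the explicit eigenvalue lists for $\rho_{2N}(x)$ and $\rho_{2N}(y)$ recorded at the end of Section~2.3. Pairing conjugate eigenvalues and applying $(1 - e^{i\theta})(1 - e^{-i\theta}) = 4\sin^2(\theta/2)$ produces the sine product in the statement; note $\det(Y - \I) = \det(\I - Y)$ since $2N$ is even, so no sign issue arises. I do not expect a real obstacle here: the only delicate point is confirming the base-change matrix computation against the convention on lifts, and once $X^p = -\I$ is invoked the rest is routine trigonometric bookkeeping.
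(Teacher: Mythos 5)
Your proposal is correct and follows essentially the same route as the paper: reduction to the $2$-complex $W$, the same choice of lifted bases, the identity $(\I-X)(\I+X+\cdots+X^{p-1})=\I-X^p=2\I$ using $\rho(x^p)=-\I$, and the eigenvalue pairing $(1-e^{i\theta})(1-e^{-i\theta})=4\sin^2(\theta/2)$. The only difference is that you spell out the verification that the chosen lifts are valid (nonvanishing of the base-change determinants), which the paper leaves implicit.
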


\subsection{Asymptotic behavior of the Reidemeister torsion}
We consider the asymptotic behavior of $\Tor{\knotexterior}{\rho_{2N}}$ 
in increasing the dimension of representation $\rho_{2N}$ to infinity.
Menal-Ferrer and Porti \cite{FerrerPorti:HigherDimReidemeister}
showed the asymptotic behavior of the absolute value of the Reidemeister torsion for
a hyperbolic knot exterior $M$, which is expressed as
\begin{equation}
\label{eq:entropy_limit}
\lim_{N \to \infty}\frac{\log |\Tor{M}{\rho_{2N}}|}{(2N)^2}
= -\frac{\vol{M}}{4\pi}.
\end{equation}
Every torus knot exterior admits no hyperbolic structure and 
the hyperbolic volume is zero.
Our main theorem shows that 
the equation in~\eqref{eq:entropy_limit} also holds 
for torus knot exteriors.
\begin{theorem}
  \label{thm:main}
  Under the assumption of Proposition~\ref{prop:Rtorsion_torusknots},
  we have the following limit for the absolute value of the Reidemeister torsion
  $\Tor{\knotexterior}{\rho_{2N}}$:
  \begin{equation}
    \lim_{N \to \infty}\frac{\log |\Tor{\knotexterior}{\rho_{2N}}|}{(2N)^2}
    = 0.
  \end{equation}
\end{theorem}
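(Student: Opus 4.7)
The plan is to apply Proposition~\ref{prop:Rtorsion_torusknots} directly and to estimate the logarithm of the resulting closed-form expression. Taking absolute values and logarithms gives
\begin{equation*}
\log |\Tor{\knotexterior}{\rho_{2N}}| = 2N\log 2 - \sum_{k=1}^N \log\!\Bigl(16 \sin^2\!\Bigl(\tfrac{(2k-1)a\pi}{2p}\Bigr)\sin^2\!\Bigl(\tfrac{(2k-1)b\pi}{2q}\Bigr)\Bigr).
\end{equation*}
Dividing by $(2N)^2$, the $2N\log 2$ term contributes $O(1/N)$, so it suffices to show that the remaining sum is $O(N)$.

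The heart of the argument is a simple periodicity observation. Setting $\theta_k = (2k-1)a\pi/(2p)$, we have $\theta_{k+p} - \theta_k = a\pi$, and because $a$ is odd this gives $|\sin \theta_{k+p}| = |\sin \theta_k|$; thus $\{|\sin \theta_k|\}_{k\geqq 1}$ is periodic of period at most $p$, and the same reasoning handles the second sine factor using that $b$ is odd. Moreover $(2k-1)a$ is odd while $2p$ is even, so $\theta_k$ is never an integer multiple of $\pi$ and hence none of the sine values vanishes; the analogous statement holds for the $b, q$ factors. Consequently both sequences take only finitely many positive values, and there exists $c > 0$ such that $|\sin \theta_k|$ and $|\sin((2k-1)b\pi/(2q))|$ lie in $[c, 1]$ uniformly in $k$.

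It follows that each summand $\log\bigl(16 \sin^2 \theta_k \sin^2((2k-1)b\pi/(2q))\bigr)$ is bounded in absolute value by a constant depending only on $p, q, a, b$, so the whole sum is $O(N)$. Combined with the linear contribution $2N\log 2$, we obtain $\bigl|\log|\Tor{\knotexterior}{\rho_{2N}}|\bigr| = O(N)$, and dividing by $(2N)^2$ yields the desired limit zero as $N \to \infty$. No serious obstacle is anticipated: once the uniform positive lower bound on the sine factors is established, the conclusion is immediate from $O(N)/N^2 \to 0$, and both the non-vanishing and periodicity follow directly from the parity condition $a \equiv b \equiv 1 \pmod 2$ built into the hypothesis.
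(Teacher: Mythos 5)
Your proposal is correct and follows essentially the same route as the paper: both start from the closed form in Proposition~\ref{prop:Rtorsion_torusknots} and reduce the problem to a uniform positive lower bound on the sine factors, which forces $\log|\Tor{\knotexterior}{\rho_{2N}}|$ to be $O(N)$. The only difference is cosmetic — the paper exhibits the explicit bound $|\sin((2k-1)a\pi/2p)| \geqq \sin(\pi/2p)$ directly, whereas you deduce the existence of such a bound from periodicity and the odd/even parity argument.
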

\begin{proof}[Proof of Theorem~\ref{thm:main}]
  This follows from Proposition~\ref{prop:Rtorsion_torusknots} and the existences on
  upper and lower bounds of $|\sin ((2k-1)a\pi/2p)|$ and $|\sin ((2k-1)b\pi/2q)|$
  for $1 \leqq k \leqq N$, which depend only on $p$ and $q$.
  Since both $a$ and $b$ are odd, it follows that
  $|\sin (\pi/2p)| \leqq |\sin ((2k-1)a\pi/2p)| \leqq 1$ and
  $|\sin (\pi/2q)| \leqq |\sin ((2k-1)b\pi/2q)| \leqq 1$.
  Hence we have the following lower and upper bounds for $|\Tor{\knotexterior}{\rho_{2N}}|$:
  $$
    \frac{
    2^{2N}
  }{
    4^{2N}
    }
    \leqq
  |\Tor{\knotexterior}{\rho_{2N}}| \leqq
  \frac{
    2^{2N}
  }{
    4^{2N}
    \sin^{2N} \left( \displaystyle{\frac{\pi}{2p}} \right)
    \sin^{2N} \left( \displaystyle{\frac{\pi}{2q}} \right)
  }.
  $$
  We can see the asymptotic behavior of $|\Tor{\knotexterior}{\rho_{2N}}|$ from
  the following inequalities:
  $$
    \frac{
      \log |\Tor{\knotexterior}{\rho_{2N}}|
    }{
      (2N)^2
    }
    \geqq
    \frac{-\log 2}{2N} 
    \quad \xrightarrow{N \to \infty} 0
    $$
    and
    $$
    \frac{
      \log |\Tor{\knotexterior}{\rho_{2N}}|
    }{
      (2N)^2
    }
    \leqq
    \frac{-\log 2}{2N}
    - \frac{
      \log \left|
      \sin \left( \displaystyle{\frac{\pi}{2p}} \right)
      \sin \left( \displaystyle{\frac{\pi}{2q}} \right)
      \right|
    }{
      2N
    }
    \quad \xrightarrow{N \to \infty} 0.
    $$
\end{proof}

\section*{Acknowledgments}
The author wishes to express his thanks to Joan Porti and Teruaki
Kitano for drawing the author's attention to the asymptotic behavior of
the higher dimensional Reidemeister torsion for torus knot exteriors.
This work was motivated in RIMS Seminar ``Representation spaces,
twisted topological invariants and geometric structures of
3-manifolds''.  This research was supported by Research Fellowships of
the Japan Society for the Promotion of Science for Young Scientists.

\bibliographystyle{amsalpha}
\bibliography{higherDimTorsionTorusKnots}

\newcommand{\noop}[1]{}
\providecommand{\bysame}{\leavevmode\hbox to3em{\hrulefill}\thinspace}
\providecommand{\MR}{\relax\ifhmode\unskip\space\fi MR }
\providecommand{\MRhref}[2]{%
  \href{http://www.ams.org/mathscinet-getitem?mr=#1}{#2}
}
\providecommand{\href}[2]{#2}
\begin{thebibliography}{MFP12}

\bibitem[Joh]{Johnson:unpublished}
D.~Johnson, \emph{A geometric form of {C}asson's invariant and its connection
  to {R}eidemeister torsion}, unpublished lecture notes.

\bibitem[KM12]{KitanoMorifuji:TwistedAlexTorusKnots}
T.~Kitano and T.~Morifuji, \emph{{T}wisted {A}lexander polynomials for
  irreducible $\mathrm{SL}(2;\mathbb{C})$-representations of torus knots}, Ann.
  Sc. Norm. Super. Pisa Cl. Sci. \textbf{11} (2012), 395--406.

\bibitem[MFP]{FerrerPorti:HigherDimReidemeister}
P.~Menal-Ferrer and J.~Porti, \emph{{H}igher dimensional {R}eidemeister torsion
  invariants for cusped hyperbolic $3$-manifolds}, arXiv:1110.3718.

\bibitem[MFP12]{FerrerPorti:TwistedCohomology}
\bysame, \emph{Twisted cohomology for hyperbolic three manifolds}, Osaka
  J.~Math. \textbf{49} (2012), 741--769.

\bibitem[M{\"u}l12]{Muller:AsymptoticsAnalyticTorsion}
W.~M{\"u}ller, \emph{{T}he asymptotics of the {R}ay--{S}inger analytic torsion
  of hyperbolic $3$-manifolds}, Metric and Differential Geometry, The Jeff
  Cheeger Anniversary Volume, Progress in Math., vol. 297, Birkh\"auser, 2012,
  pp.~317--352.

\bibitem[Rag65]{Raghunathan65:CohomologyVanishing}
M.~S. Raghunathan, \emph{On the first cohomology of discrete subgroups of
  semisimple {L}ie groups}, Amer.~J.~Math. \textbf{87} (1965), 103--139.

\bibitem[Tur01]{Turaev:2000}
V.~Turaev, \emph{Introduction to combinatorial torsions}, Lectures in
  Mathematics, Birkh{\"a}user, 2001.

\bibitem[Yam]{Yamaguchi:HigherDimRtorsionSeifert}
Y.~Yamaguchi, \emph{A surgery formula for the asymptotics of the higher
  dimensional {R}eidemeister torsion and {S}eifert fibered spaces},
  arXiv:1210.8049.

\end{thebibliography}
\end{document}